\newenvironment{proof}[1][\proofname]{\par\normalfont
  \topsep6pt plus6pt\trivlist\item[\hskip\labelsep\itshape
  #1\@addpunct{:}]\ignorespaces}{\qed\endtrivlist}
\newcommand{\proofname}{Proof}
\DeclareRobustCommand{\qed}{%
  \ifmmode
  \else\leavevmode\unskip\penalty9999\hbox{}\nobreak\hfill\fi
  \quad\hbox{\qedsymbol}}
\newcommand{\qedsymbol}{\openbox}
\newcommand{\openbox}{\leavevmode\hbox to.77778em{%
    \hfil\vrule\vbox to.675em{%
      \hrule width.6em\vfil\hrule}\vrule\hfil}}
\newcommand{\K}{\mathcal{K}}
\newcommand{\Exp}{\mathbb{E}}
\newcommand{\Cov}{\mathsf{Cov}}
\newcommand{\C}{\mathbb{C}}\newcommand{\N}{\mathbb{N}}
\newcommand{\R}{\mathbb{R}}
\newcommand{\dd}{\mathrm{d}}
\newcommand{\Lpl}{\mathcal{L}}
\newcommand{\Tr}{\mathsf{Tr}}\newcommand{\Det}{\mathsf{Det}}
\newcommand{\op}{\mathsf{op}}
\newcommand{\ind}[1]{\bm{1}_{#1}}
\let\Bar\overline
\let\Tilde\widetilde
\newtheorem{proposition}{Proposition}
\newtheorem{theorem}{Theorem}
\newtheorem{lemma}{Lemma}
\newtheorem{remark}{Remark}
\title{Scaling limits of stationary determinantal shot-noise
  fields}
\author{Takumi Aburayama and Naoto Miyoshi\thanks{%
  Corresponding author: Department of Mathematical and Computing
  Science, Tokyo Institute of Technology, 2-12-1-W8-52 Ookayama,
  Tokyo 152-8552, Japan. E-mail: miyoshi@is.titech.ac.jp}
  \thanks{%
    The second author's work was supported by the Japan Society
    for the Promotion of Science Grant-in-Aid for Scientific
    Research~(C) 19K11838.}
  \\ Tokyo Institute of Technology}
\begin{document}\sloppy\allowbreak\allowdisplaybreaks
\maketitle

\begin{abstract}
We consider a shot-noise field defined on a stationary determinantal
point process on $\R^d$ associated with i.i.d.\ amplitudes and a
bounded response function, for which we investigate the scaling limits
as the intensity of the point process goes to infinity.
Specifically, we show that the centralized and suitably scaled
shot-noise field converges in finite dimensional distributions to i)~a
Gaussian random field when the amplitudes have the finite second
moment and ii)~an $\alpha$-stable random field when the amplitudes
follow a regularly varying distribution with index~$-\alpha$ for
$\alpha\in(1,2)$.
We first prove the corresponding results for the shot-noise field
defined on a homogeneous Poisson point process and then extend them to
the one defined on a stationary determinantal point process.
\\
{\bf Keywords:} 
Shot-noise fields; determinantal point processes; scaling
limits; stable random fields; regularly varying distributions.
\end{abstract}

\section{Introduction}\label{sec:Intro}

Let $\Phi_\lambda = \sum_{n=1}^\infty\delta_{X_n}$ denote a simple and
stationary point process on $\R^d$, $d\in\N=\{1,2,\ldots\}$, with
intensity~$\lambda = \Exp[\Phi_\lambda([0,1]^d)] \in (0,\infty)$.
We consider a class of shot-noise fields given by
\begin{equation}\label{eq:shot-noise}
  I_\lambda(z) = \sum_{n=1}^\infty P_n\,\ell(z-X_n),
  \quad z\in\R^d,
\end{equation}
where $P_n$, $n\in\N$, are independent and identically distributed
(i.i.d.)\ nonnegative random variables, called \textit{amplitudes},
which are also independent of $\Phi_\lambda$, and $\ell$ is a
nonnegative and bounded function on $\R^d$, called a \textit{response
  function}.
Shot-noise fields such as \eqref{eq:shot-noise} have been observed in
various areas and studied extensively in the literature
(cf.\ \cite[Sec.~5.6]{ChiuStoyKendMeck13} and references therein).
In particular, recently, they have been used as models for
interference fields in wireless communication networks, where wireless
interferers are located according to a spatial point process
(cf.\ \cite{HaenGant09,BaccBlas09ab}).
In this work, we study the scaling limits of \eqref{eq:shot-noise} as
$\lambda\to\infty$ when $\Phi_\lambda$ is a stationary determinantal
point process.
Determinantal point processes represent a repulsive feature of points
in space and have also been considered as location models of base
stations in cellular wireless
networks~(cf.\ \cite{MiyoShir14a,TorrLeon14,LiBaccDhilAndr15}).

As an early result on the scaling limits of spatial shot-noise fields,
Heinrich and Schmidt~\cite{HeinSchm85} consider a more general form
than \eqref{eq:shot-noise} and show that the centralized and scaled
shot-noise at one position converges in distribution to a Gaussian
random variable when $\Phi_\lambda$ is Brillinger mixing and a
condition which corresponds to $\Exp[{P_n}^2]<\infty$ in
\eqref{eq:shot-noise} is satisfied.
Since Biscio and Lavancier~\cite{BiscLava16} and
Heinrich~\cite{Hein16} prove that stationary ($\alpha$-)determinantal
point processes are Brillinger mixing, the result
in~\cite{HeinSchm85}, of course, covers the scaling limit of
\eqref{eq:shot-noise} at one position when $\Exp[{P_n}^2]<\infty$.
Based on this background, we here examine the convergence in finite
dimensional distributions and show that the centralized and suitably
scaled version of \eqref{eq:shot-noise} converges to a Gaussian random
field when $\Exp[{P_n}^2]<\infty$ and to an $\alpha$-stable random
field when $P_n$, $n\in\N$, follow a regularly varying distribution
with index~$-\alpha$ for $\alpha\in(1,2)$.
As related work along this direction, Baccelli and
Biswas~\cite{BaccBisw15} consider the shot-noise
field~\eqref{eq:shot-noise}, where $\Phi_\lambda$ is a homogeneous
Poisson point process and the response function~$\ell(x)$ is power-law
and diverges as $x\to0$, and show that a suitably scaled (but
non-centralized) version converges in finite dimensional distributions
to an $\alpha$-stable random field with $\alpha\in(0,1)$.
Aside from this, Kaj~\textit{et al.}~\cite{KajLeskNorrSchm07} consider
a random grain field defined on a homogeneous Poisson point process
associated with a regularly varying volume distribution, and derive
some scaling limits in the sense of convergence in finite dimensional
distributions.
Furthermore, the results in \cite{KajLeskNorrSchm07} are extended by
Breton~\textit{et al.}~\cite{BretClarGoba19} to the one defined on a
stationary determinantal point process.

The rest of the paper is organized as follows.
In the next section, after providing the centralized and scaled
version of \eqref{eq:shot-noise}, we prove the corresponding results
for the shot-noise field defined on a homogeneous Poisson point
process.
These proofs provide the basis for showing our main results, and then
in Section~\ref{sec:determinantal}, we extend them to the one defined
on a stationary determinantal point process.
Finally, conclusion is given in Section~\ref{sec:conclusion}.


\section{Preliminary: Scaling limits of Poisson shot-noise
  fields}\label{sec:Preliminaries}

Throughout the paper, we assume that the distribution function~$F_P$
of the amplitudes~$P_n$, $n\in\N$, has the finite mean $p =
\int_0^\infty t\,\dd F_P(t) < \infty$, and the response
function~$\ell$ is bounded and satisfies $\int_{\R^d} \ell(x)\,\dd x <
\infty$.
With this condition, the shot-noise field~$I_\lambda$ in
\eqref{eq:shot-noise} is also stationary on $\R^d$ and Campbell's
formula~(cf.~\cite[p.~8, Theorem~1.2.5]{BaccBlasKarr20}) leads to the
expectation of $I_\lambda(0)$ as
\[
  \Exp[I_\lambda(0)]
  = \lambda p\int_{\R^d} \ell(x)\,\dd x < \infty
\]
(see cf.~\cite{West76} for more general conditions for the almost sure
convergence of shot-noise fields).
The centralized and scaled version of $I_\lambda$ is then given by
\begin{equation}\label{eq:TildeI_lambda}
  \Tilde{I_\lambda}(z)
  = \frac{I_\lambda(z) - \Exp[I_\lambda(z)]}{g(\lambda)},
  \quad z\in\R^d,
\end{equation}
where the function $g$ is suitably chosen, and we investigate its
limit as $\lambda\to\infty$ in the sense of convergence in finite
dimensional distributions when $\Phi_\lambda$ is a determinantal point
process.
As a preliminary, however, we first give the proofs for the case where
$\Phi_\lambda$ is a homogeneous Poisson point process.

\subsection{Poisson shot-noise with finite second moment of
  amplitudes}

Here, we assume that $\Phi_\lambda$ is a homogeneous Poisson point
process and the amplitude distribution~$F_P$ has the finite second
moment.
The result below is proved straightforwardly and is indeed introduced
without proof in the Introduction of \cite{BaccBisw15}. 
However, we prove it here not only for the completeness of the paper
but also because the proof serves as the basis for showing the later
results.

\begin{proposition}\label{prp:Poi-fv}
Let $\Phi_\lambda = \sum_{n=1}^\infty\delta_{X_n}$ be a homogeneous
Poisson point process with intensity~$\lambda \in (0,\infty)$.
Suppose that $\Exp[{P_1}^2]<\infty$ and let $g(\lambda) =
\lambda^{1/2}$ in \eqref{eq:TildeI_lambda}.
Then, as $\lambda\to\infty$, $\{\Tilde{I_\lambda}(z)\}_{z\in\R^d}$
converges in finite dimensional distributions to a Gaussian random
field~$\{N(z)\}_{z\in\R^d}$ with covariance function;
\begin{align}\label{eq:Poi-fv}
  \Cov[N(z_1), N(z_2)]
  &= \Exp[{P_1}^2]
     \int_{\R^d}\ell(z_1-x)\,\ell(z_2-x)\,\dd x,
   \quad z_1,z_2\in\R^d.
\end{align}
\end{proposition}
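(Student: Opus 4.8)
The plan is to prove convergence of the finite dimensional distributions directly, by computing the joint characteristic function of $(\Tilde{I_\lambda}(z_1),\dots,\Tilde{I_\lambda}(z_k))$ and showing it converges to that of a centered Gaussian vector, then invoking L\'evy's continuity theorem. Fix $k\in\N$ and $z_1,\dots,z_k\in\R^d$, and let $\Sigma=(\Sigma_{jj'})_{j,j'=1}^k$ be the matrix with entries $\Sigma_{jj'}=\Exp[{P_1}^2]\int_{\R^d}\ell(z_j-x)\,\ell(z_{j'}-x)\,\dd x$; it suffices to show that for every $\bm{t}=(t_1,\dots,t_k)\in\R^k$ the characteristic function converges to $\exp(-\tfrac12\bm{t}^\top\Sigma\,\bm{t})$.

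First I would rewrite the linear combination in terms of a single function. Setting $H(x)=\sum_{j=1}^k t_j\,\ell(z_j-x)$, we have $\sum_j t_j\,I_\lambda(z_j)=\sum_n P_n\,H(X_n)$, so $\sum_j t_j\,\Tilde{I_\lambda}(z_j)=\lambda^{-1/2}\bigl(\sum_n P_n H(X_n)-\Exp[\sum_n P_n H(X_n)]\bigr)$. Regarding $\{(X_n,P_n)\}$ as a Poisson process on $\R^d\times[0,\infty)$ with intensity measure $\lambda\,\dd x\,\dd F_P$, Campbell's characteristic-functional formula applies, and the centering exactly cancels the linear term, yielding
\[
  \log\Exp\Bigl[\exp\Bigl(i\sum_{j=1}^k t_j\,\Tilde{I_\lambda}(z_j)\Bigr)\Bigr]
  =\lambda\int_{\R^d}\!\int_0^\infty
   \bigl(e^{\,i\lambda^{-1/2}vH(x)}-1-i\lambda^{-1/2}vH(x)\bigr)\,\dd F_P(v)\,\dd x.
\]

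The heart of the argument is to pass to the limit $\lambda\to\infty$ inside this integral by dominated convergence. Pointwise in $(x,v)$, a second-order Taylor expansion gives $\lambda\bigl(e^{\,i\lambda^{-1/2}vH(x)}-1-i\lambda^{-1/2}vH(x)\bigr)\to-\tfrac12 v^2 H(x)^2$. For the dominating function I would use the elementary bound $|e^{iy}-1-iy|\le\tfrac12 y^2$, which controls the integrand uniformly in $\lambda$ by $\tfrac12 v^2 H(x)^2$. This majorant is integrable against $\dd F_P(v)\,\dd x$ precisely because $\Exp[{P_1}^2]=\int_0^\infty v^2\,\dd F_P(v)<\infty$ and $H\in L^2(\R^d)$; the latter follows since $\ell$ bounded with $\int_{\R^d}\ell<\infty$ gives $\int_{\R^d}\ell^2\le\|\ell\|_\infty\int_{\R^d}\ell<\infty$, so $\ell\in L^2$ and the finite linear combination $H$ lies in $L^2$ as well.

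Consequently the log-characteristic function converges to $-\tfrac12\Exp[{P_1}^2]\int_{\R^d}H(x)^2\,\dd x$, and expanding the square via the terms $\int_{\R^d}\ell(z_j-x)\ell(z_{j'}-x)\,\dd x$ identifies this limit with $-\tfrac12\bm{t}^\top\Sigma\,\bm{t}$. This is exactly the log-characteristic function of the centered Gaussian vector with covariance $\Sigma$, so L\'evy's continuity theorem gives convergence in distribution of $(\Tilde{I_\lambda}(z_1),\dots,\Tilde{I_\lambda}(z_k))$, i.e.\ convergence in finite dimensional distributions to a Gaussian field $N$ with covariance \eqref{eq:Poi-fv}. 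The only genuine obstacle is the dominated-convergence step; once the $L^2$-membership of $H$ and the quadratic majorant are secured, the rest is routine.
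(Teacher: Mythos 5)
Your proof is correct, and its analytic core is the same as the paper's: both derive an exact exponential formula for the log-transform of the centered field from the Poisson structure, observe that the centering cancels the linear term so that only the second-order remainder survives (your $e^{iy}-1-iy$, the paper's $\psi(u)=e^{-u}-1+u$), and pass to the limit by dominated convergence with the quadratic majorant ($|e^{iy}-1-iy|\le y^2/2$, resp.\ $0\le\psi(u)\le u^2/2$), using $\Exp[{P_1}^2]<\infty$ and $\ell\in L^1\cap L^\infty\subseteq L^2$. The packaging differs in two ways. First, you fold the amplitudes into a marked Poisson process on $\R^d\times[0,\infty)$ and apply Campbell's formula there, while the paper conditions on $\Phi_\lambda$, integrates out the marks through $\Lpl_P$, and invokes the probability generating functional as in \eqref{eq:Poi-fv-pr3}; this is the same computation. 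Second, and more substantively, you work with characteristic functions and close with L\'evy's continuity theorem, whereas the paper works with the finite-dimensional Laplace transform on $[0,\infty)^m$ and concludes from the convergence of \eqref{eq:Poi-fv-pr4} to \eqref{eq:Poi-fv-pr5}. Your Fourier route makes the final step textbook-tight: since the variables $\Tilde{I_\lambda}(z_j)$ are centered, hence signed, deducing weak convergence from pointwise convergence of one-sided Laplace transforms requires an additional (true, but less standard) argument of Curtiss type, which the paper leaves implicit. What the Laplace formulation buys in exchange is reuse within the paper: the identity \eqref{eq:Poi-fv-pr4} is precisely the starting point of the proof for the regularly varying case (Theorem~\ref{thm:Poi-infv}), and it is the Laplace functional at nonnegative arguments that admits the Fredholm-determinant expression of Lemma~\ref{lem:det-fdl} on which the determinantal extensions rest; a characteristic-function version would not plug into that machinery as directly.
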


Note that the covariance in \eqref{eq:Poi-fv} is finite since $\ell$
is bounded and integrable with respect to the Lebesgue measure on
$\R^d$.
Let $b_\ell$ and $c_\ell$ denote positive constants such that
$\ell(x)\in[0, b_\ell]$ for $x\in\R^d$ and $\int_{\R^d}\ell(x)\,\dd x
= c_\ell$.
Then, we have clearly
\begin{equation}\label{eq:bound}
  \int_{\R^d}\ell(z_1-x)\,\ell(z_2-x)\,\dd x
  \le b_\ell\,c_\ell < \infty.
\end{equation}

\begin{proof}
Consider the finite dimensional Laplace transform of
$\Tilde{I_\lambda}$ in \eqref{eq:TildeI_lambda} at $\bm{z} =
(z_1,\ldots,z_m)\in\R^{d\times m}$ for $m\in\N$; that is, for
$\bm{s}^\top = (s_1,\ldots,s_m)\in[0,\infty)^m$,
\eqref{eq:TildeI_lambda} leads to 
\begin{align}\label{eq:Poi-fv-pr1}
  \Lpl_{\Tilde{I_\lambda}}(\bm{s}, \bm{z})
  &:= \Exp\biggl[
        \exp\biggl(
          - \sum_{j=1}^m s_j\,\Tilde{I_\lambda}(z_j)
        \biggr)
      \biggr]
  \nonumber\\
  &= \Exp\biggl[
       \exp\biggl(
         - \frac{1}{g(\lambda)}
           \sum_{j=1}^m s_j\,I_\lambda(z_j)
       \biggr)
     \biggr]\,
     \exp\biggl(
       \frac{1}{g(\lambda)}
       \sum_{j=1}^m s_j\,\Exp[I_\lambda(z_j)]
     \biggr).
\end{align}
Applying \eqref{eq:shot-noise} and Campbell's formula, we reduce the
inside of the second exponential in the last expression above to
\begin{equation}\label{eq:Poi-fv-pr2}
  \frac{1}{g(\lambda)}
  \sum_{j=1}^m s_j\,\Exp[I_\lambda(z_j)]
  = \frac{\lambda\,p}{g(\lambda)}
    \int_{\R^d} \xi_{\bm{s},\bm{z}}(x)\,\dd x,
\end{equation}
where $\xi_{\bm{s},\bm{z}}(x) := \sum_{j=1}^m s_j\,\ell(z_j-x)$ and
$\Exp[P_n]=p$ is used.
On the other hand, applying \eqref{eq:shot-noise} and the
probability generating functional of a Poisson point process
(cf.~\cite[p.~25, Exercise~3.6]{LastPenr17}) to the first expectation
in the last expression of \eqref{eq:Poi-fv-pr1} leads to
\begin{align}\label{eq:Poi-fv-pr3}
  \Exp\biggl[
    \exp\biggl(
      - \frac{1}{g(\lambda)}
        \sum_{j=1}^m s_j\,I_\lambda(z_j)
    \biggr)
  \biggr]
  &= \Exp\biggl[
       \prod_{n=1}^\infty
         \Lpl_P\Bigl(\frac{\xi_{\bm{s},\bm{z}}(X_n)}{g(\lambda)}\Bigr)
     \biggr]
  \nonumber\\
  &= \exp\biggl(
       - \lambda
         \int_{\R^d}
           \Bigl[
             1 - \Lpl_P\Bigl(
                   \frac{\xi_{\bm{s},\bm{z}}(x)}{g(\lambda)}
                 \Bigr)
           \Bigr]\,
         \dd x
     \biggr),
\end{align}
where $\Lpl_P(s)=\Exp[e^{-s P_1}]$ denotes the Laplace transform of
$P_1$.
Therefore, plugging \eqref{eq:Poi-fv-pr2} and \eqref{eq:Poi-fv-pr3}
into \eqref{eq:Poi-fv-pr1}, we have
\begin{equation}\label{eq:Poi-fv-pr4}
  \Lpl_{\Tilde{I_\lambda}}(\bm{s}, \bm{z})
  = \exp\biggl(
      \lambda
      \int_{\R^d}\!\int_0^\infty
        \psi\Bigl(
          \frac{\xi_{\bm{s},\bm{z}}(x)}{g(\lambda)}\,t
        \Bigr)\,
      \dd F_P(t)\,\dd x
    \biggr),
\end{equation}
where $\psi(u) = e^{-u}-1+u$, and we used $\Lpl_P(s) = \int_0^\infty
e^{-s t}\,\dd F_P(t)$ and $p = \int_0^\infty t\,\dd F_P(t)$.
We now take $g(\lambda) = \lambda^{1/2}$.
Then, since $\psi(u) = u^2/2 + o(u^2)$ as $u\downarrow0$, if the order
of the limit and the integral is interchangeable (which is confirmed
below), we obtain
\begin{align}\label{eq:Poi-fv-pr5}
  \lim_{\lambda\to\infty}
    \Lpl_{\Tilde{I_\lambda}}(\bm{s}, \bm{z})
  &= \exp\biggl(
       \frac{\Exp[{P_1}^2]}{2}
       \int_{\R^d}[\xi_{\bm{s},\bm{z}}(x)]^2\,\dd x
     \biggr)
  \nonumber\\     
  &= \exp\biggl(
       \frac{\Exp[{P_1}^2]}{2}\,
       \bm{s}^\top\bm{L}(\bm{z})\,\bm{s}
     \biggr),    
\end{align}
where the $(j,k)$-element of matrix~$\bm{L}(\bm{z}) =
\bigl(L(z_j,z_k)\bigr)_{j,k=1}^m$ is given by $L(z_j, z_k) =
\int_{\R^d}\ell(z_j-x)\,\ell(z_k-x)\,\dd x$ and the assertion of the
proposition holds.
It remains to confirm the interchangeability of the order of the limit
and the integral in \eqref{eq:Poi-fv-pr4} as $\lambda\to\infty$.
Since $\psi(u) \in[0, u^2/2]$, we have $0\le
\lambda\,\psi\bigl(\xi_{\bm{s},\bm{z}}(x)\,t/\lambda^{1/2}\bigr) \le
[\xi_{\bm{s},\bm{z}}(x)\,t]^2/2$ and the integral of
$[\xi_{\bm{s},\bm{z}}(x)\,t]^2/2$ with respect to $\dd F_P(t)\,\dd x$
is provided as the inside of the exponential in \eqref{eq:Poi-fv-pr5},
which is finite from \eqref{eq:bound}.
Hence, the dominated convergence theorem is applicable and the proof
is completed.
\end{proof}

\subsection{Poisson shot-noise with regularly varying amplitude
  distribution}

Next, we assume that the tail $\Bar{F_P}(t)=1-F_P(t)$ of the amplitude
distribution is regularly varying with index~$-\alpha$ for
$\alpha\in(1,2)$; that is (cf.~\cite{BingGoldTeug87} or
\cite{Sene76}),
\[
  \lim_{t\to\infty}\frac{\Bar{F_P}(c t)}{\Bar{F_P}(t)}
  = c^{-\alpha}
  \quad\text{for some $c>0$.}
\]
Note that $\Exp[{P_i}^2]=\infty$ in this case.
We use the following properties of regularly varying functions, where
$a(x)\sim b(x)$ as $x\to\infty$ stands for
$\lim_{x\to\infty}a(x)/b(x)=1$.

\begin{proposition}\label{prp:Karamata}
\begin{enumerate}
\item (Cf.~\cite[p.~28, Theorem~1.5.12]{BingGoldTeug87}
  or~\cite[p.~21]{Sene76})
  Let $f$ be regularly varying with index~$\gamma>0$.
  Then, there exists an asymptotic inverse~$g$ of $f$ satisfying
  $f(g(x)) \sim g(f(x)) \sim x$ as $x\to\infty$, where $g$ is
  asymptotically unique and regularly varying with index
  $1/\gamma$.
\item (Representation Theorem; cf.~\cite[p.~12,
  Theorem~1.3.1]{BingGoldTeug87} or~\cite[p.~2, Theorem~1.2]{Sene76}
  Function~$L_0$ is slowly varying if and only if there exists a
  positive constant~$a_0$ such that
  \begin{equation}\label{eq:Representation}
    L_0(x)
    = \exp\Bigl(
        \eta (x) + \int_{a_0}^x\frac{\epsilon(t)}{t}\,\dd t
      \Bigr),
    \quad x\ge a_0,    
  \end{equation}
  where $\eta(x)$ is bounded and converges to a constant as
  $x\to\infty$, and $\epsilon(t)$ is bounded and converges to zero as
  $t\to\infty$.
\end{enumerate}
\end{proposition}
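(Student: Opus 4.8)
The plan is to treat this as the two classical pillars of the theory of regularly varying functions, both of which rest on a single analytic engine: the \emph{Uniform Convergence Theorem} (UCT), which asserts that for a slowly varying $L_0$ the ratio $L_0(\lambda x)/L_0(x)\to1$ uniformly for $\lambda$ in compact subsets of $(0,\infty)$. I would establish (or, given the cited references, simply invoke) the UCT first, since both parts are otherwise essentially bookkeeping built on top of it.

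For the Representation Theorem (Part~2) I would pass to additive variables. Setting $c(x)=\log L_0(e^x)$, the UCT translates into $c(x+u)-c(x)\to0$ uniformly for $u$ in compact sets. I would then smooth $c$ by averaging, putting $\tilde c(x)=\int_0^1 c(x+u)\,\dd u$. Uniform convergence gives $\tilde c(x)-c(x)=\int_0^1[c(x+u)-c(x)]\,\dd u\to0$, while $\tilde c$ is absolutely continuous with $\tilde c'(x)=c(x+1)-c(x)\to0$. Writing $c(x)=[c(x)-\tilde c(x)]+\tilde c(x)$ and integrating $\tilde c'$ from a base point $x_0$, I obtain
\[
  c(x)=\eta_0(x)+\int_{x_0}^x\epsilon_0(v)\,\dd v,
\]
where $\eta_0:=c-\tilde c+\tilde c(x_0)$ is bounded and convergent and $\epsilon_0:=\tilde c'$ is bounded and tends to zero. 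Exponentiating and substituting $t=e^x$, $s=e^v$ (so $\dd v=\dd s/s$) yields the claimed form with $a_0=e^{x_0}$, $\eta(t)=\eta_0(\log t)$ and $\epsilon(s)=\epsilon_0(\log s)$. The converse is immediate: from the representation $L_0(\lambda x)/L_0(x)=\exp\bigl(\eta(\lambda x)-\eta(x)+\int_1^\lambda\epsilon(xu)/u\,\dd u\bigr)$, and both the $\eta$-difference and the integral vanish as $x\to\infty$, because $\eta$ converges and $\epsilon(xu)\to0$ boundedly.

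For the asymptotic inverse (Part~1) I would write $f(x)=x^\gamma L_0(x)$; since $\gamma>0$ we have $f(x)\to\infty$, and by monotone equivalence $f$ is asymptotic to a strictly increasing continuous function, whose genuine inverse I take as $g$. One then checks $f(g(x))\sim x\sim g(f(x))$ directly from this equivalence. To see that $g$ is regularly varying of index $1/\gamma$, I would fix $c>0$ and analyze $g(cx)/g(x)$ along any convergent subsequence with limit $L$: applying $f$ and using $f(g(\cdot))\sim(\cdot)$ together with the regular variation of $f$ forces $f(g(cx))/f(g(x))\to L^\gamma$ on one hand and $\sim cx/x=c$ on the other, so $L^\gamma=c$ and $L=c^{1/\gamma}$; since every subsequential limit is pinned to this value, $g(cx)/g(x)\to c^{1/\gamma}$. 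Asymptotic uniqueness follows because any two asymptotic inverses $g_1,g_2$ satisfy $f(g_i(x))\sim x$, and composing with the monotone version of $f$ and its inverse forces $g_1\sim g_2$.

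The genuinely hard kernel is the Uniform Convergence Theorem---the upgrade from the pointwise definition of slow variation to locally uniform convergence---together with the monotone-equivalence step that legitimizes taking a true inverse in Part~1. Once these are in hand, the smoothing argument for the representation and the subsequence argument for the index are routine. Since both ingredients are entirely standard (\cite[Ch.~1]{BingGoldTeug87}, \cite{Sene76}), in the paper I would state the proposition with reference to these sources rather than reproduce the UCT in full.
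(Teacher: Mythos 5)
This proposition is stated in the paper purely as a citation of classical results --- the paper supplies no proof of its own, referring to \cite{BingGoldTeug87} and \cite{Sene76} --- and your closing recommendation to state it with reference to those sources is exactly the paper's treatment. Your sketch is moreover a correct outline of the standard textbook arguments from those references (the additive-variable smoothing proof of the Representation Theorem resting on the Uniform Convergence Theorem, and the monotone-equivalence-plus-subsequence argument for the asymptotic inverse), with only routine details left implicit, such as the measurability hypothesis underlying the UCT and ruling out the degenerate subsequential limits $0$ and $\infty$ of $g(cx)/g(x)$ before applying $f$.
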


Using the properties above, we prove the following result, which is
also new to the best of the knowledge of the authors.

\begin{theorem}\label{thm:Poi-infv}
Let $\Phi_\lambda = \sum_{i=1}^\infty\delta_{X_i}$ be a homogeneous
Poisson point process with intensity~$\lambda \in (0,\infty)$.
Suppose that $\Bar{F_P}$ is regularly varying with index~$-\alpha$ for
$\alpha\in(1,2)$ and let $g$ in \eqref{eq:TildeI_lambda} be an
asymptotic inverse of $1/\Bar{F_P}$ (so that $g$ is regularly varying
with index $1/\alpha$).
Then, as $\lambda\to\infty$, $\{\Tilde{I_\lambda}(y)\}_{y\in\R^d}$
converges in finite dimensional distributions to an $\alpha$-stable
random field~$\{S(z)\}_{z\in\R^d}$ with finite dimensional Laplace
transform;
\begin{align}\label{eq:Poi-infv}
  \Lpl_S(\bm{s}, \bm{z})
  &:= \Exp\biggl[
        \exp\biggl(
          - \sum_{j=1}^m s_j\,S(z_j)
        \biggr)
      \biggr]   
  \nonumber\\
  &= \exp\biggl(
       \frac{\Gamma(2-\alpha)}{\alpha-1}
       \int_{\R^d}
         \biggl[\sum_{j=1}^m s_j\,\ell(z_j-x)\biggr]^\alpha\,
       \dd x
     \biggr),
\end{align}
for $m\in\N$, $\bm{s}^\top = (s_1,\ldots,s_m)\in[0,\infty)^m$ and
$\bm{z}=(z_1,\ldots,z_m)\in\R^{d\times m}$.
\end{theorem}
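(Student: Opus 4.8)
The plan is to start from the finite–dimensional Laplace transform formula \eqref{eq:Poi-fv-pr4}, whose derivation uses only Campbell's formula and the probability generating functional of the Poisson process and therefore holds verbatim under the present hypotheses (the finite mean $p$ being guaranteed by $\alpha>1$), now with $g$ an asymptotic inverse of $1/\Bar{F_P}$. By Proposition~\ref{prp:Karamata}(1) this $g$ is regularly varying with index $1/\alpha$, and its defining relation gives $\lambda\,\Bar{F_P}(g(\lambda))\to1$ as $\lambda\to\infty$. It thus suffices to show that the exponent in \eqref{eq:Poi-fv-pr4} converges to $\frac{\Gamma(2-\alpha)}{\alpha-1}\int_{\R^d}[\xi_{\bm{s},\bm{z}}(x)]^\alpha\,\dd x$, after which \eqref{eq:Poi-infv} is immediate and the limit is recognized as the Laplace transform of an $\alpha$-stable random field.

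For fixed $x$, write $a=\xi_{\bm{s},\bm{z}}(x)\ge0$ and consider the inner integral $\int_0^\infty\psi(a\,t/g(\lambda))\,\dd F_P(t)$ with $\psi(u)=e^{-u}-1+u$. Since $\psi'(u)=1-e^{-u}$ and $\Bar{F_P}(0)=1$, integration by parts (the boundary terms vanishing because $\psi(0)=0$ and $\alpha>1$) together with the substitution $v=t/g(\lambda)$ turns the scaled inner integral into $\lambda\,a\int_0^\infty\Bar{F_P}(v\,g(\lambda))\,(1-e^{-av})\,\dd v$. Writing $\lambda\,\Bar{F_P}(v\,g(\lambda))=[\lambda\,\Bar{F_P}(g(\lambda))]\cdot\Bar{F_P}(v\,g(\lambda))/\Bar{F_P}(g(\lambda))$ and using regular variation of $\Bar{F_P}$ together with $\lambda\,\Bar{F_P}(g(\lambda))\to1$, the integrand converges pointwise to $a\,v^{-\alpha}(1-e^{-av})$. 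A further integration by parts evaluates $\int_0^\infty v^{-\alpha}(1-e^{-av})\,\dd v=a^{\alpha-1}\,\Gamma(2-\alpha)/(\alpha-1)$, so the inner limit is $\frac{\Gamma(2-\alpha)}{\alpha-1}\,a^\alpha$, exactly the integrand of the target.

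Two interchanges of limit and integral remain, and these are where the work lies. For the inner $v$-integral I would invoke Potter's bounds, which follow from the Representation Theorem of Proposition~\ref{prp:Karamata}(2): fixing $\delta\in(0,\min\{\alpha-1,\,2-\alpha\})$ furnishes a $\lambda$-independent dominating function behaving like $v^{1-\alpha-\delta}$ near $0$ and like $v^{-\alpha+\delta}$ near $\infty$, both integrable, and valid on the range where $v\,g(\lambda)$ exceeds the Potter threshold. On the complementary neighbourhood of the origin, where the estimate is unavailable, the crude bounds $\Bar{F_P}\le1$ and $1-e^{-av}\le av$ show the contribution is at most a constant multiple of $\lambda/g(\lambda)^2$, which tends to $0$ since $g(\lambda)^2$ is regularly varying with index $2/\alpha>1$. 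For the outer integral over $\R^d$ I would establish a uniform bound $\lambda\int_0^\infty\psi(a\,t/g(\lambda))\,\dd F_P(t)\le C\,a^\alpha$ for $a$ in the bounded range of $\xi_{\bm{s},\bm{z}}$ and all large $\lambda$; splitting $\psi(u)\le\min\{u,\,u^2/2\}$ at the level $t=g(\lambda)/a$ and applying Karamata's theorem to the truncated first and second moments of $\Bar{F_P}$ produces such a bound. Since $\xi_{\bm{s},\bm{z}}$ is bounded by $b:=b_\ell\sum_{j=1}^m s_j$ and integrable over $\R^d$, the dominating function $C\,[\xi_{\bm{s},\bm{z}}(x)]^\alpha\le C\,b^{\alpha-1}\,\xi_{\bm{s},\bm{z}}(x)$ is integrable, and dominated convergence delivers \eqref{eq:Poi-infv}.

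The main obstacle is making the domination genuinely uniform across the whole range of integration. In the inner integral the cutoff separating the Potter-controlled bulk from the uncontrolled neighbourhood of the origin moves with $\lambda$, so one must combine a dominated-convergence estimate on the bulk with a separate vanishing estimate near $0$. In the outer integral the Karamata truncated-moment asymptotics must hold uniformly in $a=\xi_{\bm{s},\bm{z}}(x)$ over its bounded range---equivalently, uniformly for truncation levels $g(\lambda)/a\ge g(\lambda)/b$, all tending to infinity---which is where I would concentrate the technical care.
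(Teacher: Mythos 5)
Your proposal is correct, and up to the convergence statement it follows the same route as the paper: both start from \eqref{eq:Poi-fv-pr4}, integrate by parts in $t$, substitute $u=t/g(\lambda)$, use $\lambda\,\Bar{F_P}(g(\lambda))\to1$ and $\Bar{F_P}(g(\lambda)u)/\Bar{F_P}(g(\lambda))\to u^{-\alpha}$ for the pointwise limit, and evaluate $\int_0^\infty(1-e^{-v})\,v^{-\alpha}\,\dd v=\Gamma(2-\alpha)/(\alpha-1)$. The genuine difference lies in the justification of the limit--integral interchange, and there your argument is not merely an alternative: it repairs a gap in the paper's own proof. The paper runs a single dominated-convergence argument jointly in $(u,x)$, bounding the ratio by $e^{2\eta^*}u^{-(\alpha-\epsilon^*)}$ for $u\ge1$ (the analogue of your Potter bound) and by the constant $b_0=\sup_{g\ge a_0,\,u\in(0,1)}\Bar{F_P}(gu)/\Bar{F_P}(g)$ for $u\in(0,1)$; but that constant is infinite, since as $u\downarrow0$ one has $\Bar{F_P}(gu)\to\Prb(P_1>0)>0$ while $\Bar{F_P}(g)\to0$ as $g\to\infty$, so no $\lambda$-free integrable dominating function covers the region $u<a_0/g(\lambda)$. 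Your two-regime treatment is exactly what is needed: Potter-type domination with $\delta<\min\{\alpha-1,\,2-\alpha\}$ (keeping $1-e^{-av}\le av$ near the origin, giving the integrable bound $v^{1-\alpha-\delta}$ there) on the bulk $\{v: v\,g(\lambda)\ge A\}$, plus the separate estimate that $\{v: v\,g(\lambda)<A\}$ contributes $O(\lambda/g(\lambda)^2)\to0$ because $2/\alpha>1$. Your outer step via Karamata's truncated-moment theorem is also sound; the only adjustment is that Potter bounds yield $C\max\{a^{\alpha-\delta},a^{\alpha+\delta}\}$ rather than $C\,a^\alpha$, which is harmless since $\alpha-\delta>1$ keeps the dominating function of $x$ bounded by a constant multiple of the integrable $\xi_{\bm{s},\bm{z}}$. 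In short: the paper's joint domination is lighter where it works, but your inner/outer split with the vanishing small-$u$ estimate is what actually makes the argument close.
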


\begin{remark}
The integral in \eqref{eq:Poi-infv} is finite for any fixed
$\bm{s}^\top = (s_1,\ldots,s_m)\in[0,\infty)^m$ and
$\bm{z}=(z_1,\ldots,z_m)\in\R^{d\times m}$.
Indeed, since $\alpha>1$, it is easy to show that
\[
  \int_{\R^d}\biggl[\sum_{j=1}^m s_j\,\ell(z_j-x)\biggr]^\alpha\,\dd x
  \le {b_\ell}^{\alpha-1}\,c_\ell\,\biggl(\sum_{j=1}^m s_j\biggr)^\alpha
  < \infty,
\]
where $b_\ell$ and $c_\ell$ are the same as in \eqref{eq:bound}.
The last expression of \eqref{eq:Poi-infv} definitely implies that
$\{S(z)\}_{z\in\R^d}$ is an $\alpha$-stable random field since each
linear combination~$\sum_{j=1}^m s_j\,S(z_j)$ for $m\in\N$,
$\bm{s}^\top = (s_1,\ldots,s_m)\in[0,\infty)^m$ and
$\bm{z}=(z_1,\ldots,z_m)\in\R^{d\times m}$ follows an $\alpha$-stable
distribution~$\mathcal{S}_\alpha(\sigma_{\bm{s},\bm{z}},1,0)$ with
\[
  \sigma_{\bm{s},\bm{z}}
  = \biggl(
      - \frac{\Gamma(2-\alpha)}{\alpha-1}
        \int_{\R^d}
          \biggl[\sum_{j=1}^m s_j\,\ell(z_j-x)\biggr]^\alpha\,
        \dd x\,
        \cos\frac{\pi\alpha}{2}
    \biggr)^{1/\alpha}
\]  
(cf.~\cite[p.~15, Proposition~1.2.12 and pp.~112--113,
    Theorem~3.1.2]{SamoTaqq94}). 
\end{remark}

\begin{proof}
We start the proof of the theorem with \eqref{eq:Poi-fv-pr4} in the
proof of Proposition~\ref{prp:Poi-fv}, where we recall that
$\xi_{\bm{s},\bm{z}}(x) = \sum_{j=1}^m s_j\,\ell(z_j-x)$ and $\psi(u)
= e^{-u}-1+u$.
Applying integration by parts to the integral with respect to
$\dd F_P(t)$ in \eqref{eq:Poi-fv-pr4}, we have
\begin{align*}
  \int_0^\infty
    \psi\biggl(
      \frac{\xi_{\bm{s},\bm{z}}(x)}{g(\lambda)}\,t
    \biggr)\,
  \dd F_P(t)
  &= \frac{\xi_{\bm{s},\bm{z}}(x)}{g(\lambda)}
     \int_0^\infty
       \Bigl[
         1 - \exp\Bigl(- \frac{\xi_{\bm{s},\bm{z}}(x)}
                              {g(\lambda)}\,t\Bigr)
       \Bigr]\,
       \Bar{F_P}(t)\,
     \dd t
  \\
  &= \xi_{\bm{s},\bm{z}}(x)
     \int_0^\infty
       \bigl[1 - e^{-\xi_{\bm{s},\bm{z}}(x)\,u}\bigr]\,
       \Bar{F_P}\bigl(g(\lambda)\,u\bigr)\,
     \dd u,
\end{align*}
where the change of variables $u = t/g(\lambda)$ is applied in the
second equality.
Therefore, since $\lambda \sim 1/\Bar{F_P}(g(\lambda))$ and
$\Bar{F_P}(g(\lambda)\,u)/\Bar{F_P}(g(\lambda)) \to u^{-\alpha}$ as
$\lambda\to\infty$, if the order of the limit and the integral is
interchangeable (which is confirmed below), the inside of the
exponential in \eqref{eq:Poi-fv-pr4} yields
\begin{align}\label{eq:Poi-infv-pr2}
  &\lambda\int_{\R^d}\!\int_0^\infty
     \psi\Bigl(\frac{\xi_{\bm{s},\bm{z}}(x)}{g(\lambda)}\,t\Bigr)\,
   \dd F_P(t)\,\dd x
  \nonumber\\   
  &\sim \int_{\R^d} \xi_{\bm{s},\bm{z}}(x)
          \int_0^\infty
            \bigl[1 - e^{-\xi_{\bm{s},\bm{z}}(x)u}\bigr]\,
            \frac{\Bar{F_P}(g(\lambda)\,u)}{\Bar{F_P}(g(\lambda))}\,
          \dd u\,\dd x
  \nonumber\\
  &\to \int_{\R^d}[\xi_{\bm{s},\bm{z}}(x)]^\alpha\,\dd x
         \int_0^\infty (1 - e^{-v})\,v^{-\alpha}\,\dd v
   \quad\text{as $\lambda\to\infty$,}
\end{align}
where $v=\xi_{\bm{s},\bm{z}}(x)\,u$ is used in the last expression.
The last integral above is equal to $\Gamma(2-\alpha)/(\alpha-1)$ and
the last expression of \eqref{eq:Poi-infv} is obtained.

It remains to show the interchangeability of the order of the limit
and the integral in \eqref{eq:Poi-infv-pr2}, where the dominated
convergence theorem can be applied if we can find an integrable bound
on
\begin{equation}\label{eq:Poi-infv-pr3}
  \xi(x)\,[1-e^{-\xi(x)u}]\,
  \frac{\Bar{F_P}(g u)}{\Bar{F_P}(g)},
\end{equation}
with respect to $\dd u\,\dd x$ on $[0,\infty)\times\R^d$ for a
positive and integrable function~$\xi$ on $\R^d$ and a sufficiently
large $g>0$.
Since $\Bar{F_P}$ is regularly varying with index $-\alpha$, we have
$\Bar{F_P}(g) = g^{-\alpha}\,L_0(g)$ with $L_0$ of the
form~\eqref{eq:Representation}.
We define constants~$\eta^*$ and $\epsilon^*$ as
\[
  \eta^* = \sup_{x\ge a_0}|\eta(x)|,
  \quad
  \epsilon^* = \sup_{t\ge a_0}|\epsilon(t)|.
\]
Note here that we can take $a_0$ in \eqref{eq:Representation} large
enough such that $\epsilon^* < \alpha-1$ since $\epsilon(t)\to0$ as
$t\to\infty$.
Then, for $g\ge a_0$ and $u\ge 1$, we have
\[
  \frac{\Bar{F_P}(gu)}{\Bar{F_P}(g)}
  \le u^{-\alpha}\,
      \exp\Bigl(
        2\eta^* + \epsilon^*\int_g^{gu}\frac{\dd t}{t}
      \Bigr)
  = e^{2\eta^*} u^{-(\alpha-\epsilon^*)},
\]
and \eqref{eq:Poi-infv-pr3} is bounded by
\[
  \xi(x)\,\bigl(
    b_0\,\ind{(0,1)}(u)
    + e^{2\eta^*} u^{-(\alpha-\epsilon^*)}\,\ind{[1,\infty)}(u)
  \bigr),
\]
where $b_0= \sup_{g\ge a_0,u\in(0,1)}\Bar{F_P}(g u)/\Bar{F_P}(g)$.
We know that $\xi$ is integrable on $\R^d$ and
\[
  \int_1^\infty u^{-(\alpha-\epsilon^*)}\,\dd u
  = \frac{1}{\alpha-1-\epsilon^*},
\]
which completes the proof.
\end{proof}

\section{Scaling limits of determinantal shot-noise
  fields}\label{sec:determinantal}

We now extend the results in the preceding section to the case where
$\Phi_\lambda$ is a stationary determinantal point process and show
that the same scaling limits are derived.
Let $\Phi_\lambda = \sum_{n=1}^\infty\delta_{X_n}$ be a stationary and
isotropic determinantal point process on $\R^d$ with
intensity~$\lambda\in(0,\infty)$ and let
$K_\lambda$:~$\R^d\times\R^d\to\C$ denote the kernel of $\Phi_\lambda$
with respect to the Lebesgue measure; that is, the $n$th product
density~$\rho_n$, $n\in\N$, of $\Phi_\lambda$ with respect to the
Lebesgue measure is given by (cf.\ \cite{HougKrisPereVira09,Sosh00})
\[
  \rho_n(x_1,x_2,\ldots,x_n)
  = \det\bigl(K_\lambda(x_i,x_j)\bigr)_{i,j=1,2,\ldots,n},
  \quad x_1,x_2,\ldots,x_n\in\R^d,
\]
where $\det$ stands for determinant.
We assume that (i)~the kernel~$K_\lambda$ is continuous on
$\R^d\times\R^d$ with $K_\lambda(x,x) = \rho_1(x) = \lambda$ for
any $x\in\R^d$; (ii)~$K_\lambda$ is Hermitian; that is,
$K_\lambda(x,y) = K_\lambda(y,x)^*$ for $x,y\in\R^d$, where $w^*$
denotes the complex conjugate of $w\in\C$; and (iii)~the integral
operator~$\K_\lambda$ on $L^2(\R^d,\dd x)$ given by
\[
  \K_\lambda f(x)
  = \int_{\R^d} K_\lambda(x,y)\,f(y)\,\dd y,
  \quad f\in L^2(\R^d,\dd x),\; x\in\R^d,
\]
has its spectrum in $[0,1]$.
Note that the operator~$\K_\lambda$ satisfying (i)--(iii) is locally
of trace-class (cf.~\cite[p.~65, Lemma]{ReedSimoIII79}), and that the
determinantal point process~$\Phi_\lambda$ exists and is locally
finite~(cf.~\cite[p.~68, Theorem~4.5.5]{HougKrisPereVira09} or
\cite[Theorem~3]{Sosh00}).
Moreover, we assume that $K_\lambda$ satisfies $|K_\lambda(x,y)|^2
= |K_\lambda(0,y-x)|^2$ which depends only on the distance~$\|x-y\|$ of
$x,y\in\R^d$.
The product densities~$\rho_n$, $n\in\N$, are then motion-invariant
(invariant to translations and rotations) and $\rho_2(0,x) = \lambda^2
- |K_\lambda(0,x)|^2$ depends only on $\|x\|$ for $x\in\R^d$.

To develop the corresponding discussion to the case of a Poisson point
process, we first give a preliminary lemma.

\begin{lemma}\label{lem:det-fdl}
Let $\Phi_\lambda$ be the determinantal point process described above.
Then, the finite dimensional Laplace transform of the shot-noise
field~$I_\lambda$ in~\eqref{eq:shot-noise} has the following
exponential expression; 
\begin{align}\label{eq:det-fdl}
  \Lpl_{I_\lambda}(\bm{s},\bm{z})
  &:= \Exp\biggl[
        \exp\biggl(- \sum_{j=1}^m s_j\,I_\lambda(z_j)\biggr)
      \biggr]
  \nonumber\\
  &= \exp\biggl(
       - \sum_{n=1}^\infty
           \frac{1}{n}\,\Tr\bigl({\K_{\lambda,\Lpl\circ\xi}}^n\bigr)
     \biggr),
\end{align}
for $m\in\N$, $\bm{z} = (z_1,\ldots,z_m)\in\R^{d\times m}$ and
$\bm{s}^\top = (s_1,\ldots,s_m)\in[0,\infty)^m$, where $\Tr$ stands
for the trace of a linear operator and $\K_{\lambda,\Lpl\circ\xi}$
denotes the integral operator given by the kernel;
\begin{equation}\label{eq:K_(lambda,L)}
  K_{\lambda,\Lpl\circ\xi}(x,y)
  = \sqrt{1-\Lpl_P\bigl(\xi_{\bm{s},\bm{z}}(x)\bigr)}
    \,K_\lambda(x,y)\,
    \sqrt{1-\Lpl_P\bigl(\xi_{\bm{s},\bm{z}}(y)\bigr)},
  \quad x,y\in\R^d,
\end{equation}
with the Laplace transform~$\Lpl_P$ of $P_1$ and
$\xi_{\bm{s},\bm{z}}(x) = \sum_{j=1}^m s_j\,\ell(z_j-x)$.
\end{lemma}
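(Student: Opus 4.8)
The plan is to evaluate the left-hand side of \eqref{eq:det-fdl} by first integrating out the amplitudes and then recognizing what remains as the generating functional of the determinantal point process. Writing $\sum_{j=1}^m s_j\,I_\lambda(z_j) = \sum_{n=1}^\infty P_n\,\xi_{\bm{s},\bm{z}}(X_n)$ from \eqref{eq:shot-noise}, and using that the $P_n$ are i.i.d.\ and independent of $\Phi_\lambda$, I would condition on $\Phi_\lambda$ and factor the conditional expectation over the amplitudes, justifying the infinite product by dominated convergence applied to the finite partial products (each factor lying in $[0,1]$), exactly as in the first line of \eqref{eq:Poi-fv-pr3}. This gives
\[
  \Lpl_{I_\lambda}(\bm{s},\bm{z})
  = \Exp\biggl[\prod_{n=1}^\infty \Lpl_P\bigl(\xi_{\bm{s},\bm{z}}(X_n)\bigr)\biggr]
  = \Exp\biggl[\prod_{n=1}^\infty \bigl(1 - h(X_n)\bigr)\biggr],
\]
where $h(x) = 1 - \Lpl_P(\xi_{\bm{s},\bm{z}}(x))$ satisfies $0\le h\le1$ since $0\le\Lpl_P\le1$.

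Next I would apply the standard expression for the generating (multiplicative) functional of a determinantal point process (cf.~\cite{HougKrisPereVira09,Sosh00}): for measurable $h$ with $0\le h\le1$ and $\sqrt{h}\,\K_\lambda\,\sqrt{h}$ of trace class,
\[
  \Exp\biggl[\prod_{n=1}^\infty\bigl(1-h(X_n)\bigr)\biggr]
  = \Det\bigl(I - \sqrt{h}\,\K_\lambda\,\sqrt{h}\bigr),
\]
and the operator $\sqrt{h}\,\K_\lambda\,\sqrt{h}$ is precisely $\K_{\lambda,\Lpl\circ\xi}$ with kernel \eqref{eq:K_(lambda,L)}. As a sanity check, taking $h=\ind{B}$ recovers the void probability $\Prb(\Phi_\lambda\cap B=\emptyset)=\Det(I-P_B\K_\lambda P_B)$. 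It then remains to pass from the Fredholm determinant to the trace series: for a trace-class operator $A$ with spectral radius strictly less than one,
\[
  \Det(I-A) = \exp\bigl(\Tr\log(I-A)\bigr)
            = \exp\biggl(-\sum_{n=1}^\infty\frac{1}{n}\,\Tr(A^n)\biggr),
\]
the second equality following from $\log(1-z)=-\sum_{n\ge1}z^n/n$ together with $\sum_{n\ge1}|\Tr(A^n)|/n \le \|A\|_{1}\sum_{n\ge1}\|A\|_{\op}^{\,n-1}<\infty$. Applying this with $A=\K_{\lambda,\Lpl\circ\xi}$ yields \eqref{eq:det-fdl}.

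The hard part will be verifying the two hypotheses used above: that $\K_{\lambda,\Lpl\circ\xi}$ is of trace class and that $\|\K_{\lambda,\Lpl\circ\xi}\|_{\op}<1$. For the trace-class property I would use that $\K_\lambda$ is a positive operator (its spectrum lies in $[0,1]$ by assumption~(iii)), so that with $M$ the multiplication operator by $\sqrt{h}$ one has $A=\K_{\lambda,\Lpl\circ\xi}=M\,\K_\lambda\,M=C^*C$ for $C=\K_\lambda^{1/2}M$. The operator $C$ is Hilbert--Schmidt because
\[
  \|C\|_{\mathrm{HS}}^2
  = \int_{\R^d} h(y)\,K_\lambda(y,y)\,\dd y
  = \lambda\int_{\R^d}\bigl(1-\Lpl_P(\xi_{\bm{s},\bm{z}}(y))\bigr)\,\dd y ,
\]
using $K_\lambda(y,y)=\lambda$ from assumption~(i); this is finite since $1-\Lpl_P(u)=\Exp[1-e^{-uP_1}]\le u\,\Exp[P_1]=p\,u$, whence $\int_{\R^d}h(x)\,\dd x\le p\int_{\R^d}\xi_{\bm{s},\bm{z}}(x)\,\dd x = p\,\bigl(\sum_{j}s_j\bigr)\,c_\ell<\infty$ by the integrability of $\ell$. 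Consequently $A=C^*C$ is trace class with $\Tr A=\lambda\int_{\R^d}h(x)\,\dd x$. For the norm bound I would exploit that $\xi_{\bm{s},\bm{z}}$ is bounded by $b_\ell\sum_j s_j$, so that $h(x)\le 1-\Lpl_P(b_\ell\sum_j s_j)<1$ uniformly in $x$ (the Laplace transform of a nonnegative variable is strictly positive), giving $\|M\|_{\op}^2=\sup_x h(x)<1$ and hence $\|A\|_{\op}\le\|M\|_{\op}^2\,\|\K_\lambda\|_{\op}<1$. I expect the only delicate point to be the rigorous identification of $\|C\|_{\mathrm{HS}}^2$ with the diagonal integral of $K_\lambda$, which I would obtain from the factorization $\K_\lambda=\K_\lambda^{1/2}\K_\lambda^{1/2}$ and the relation $\int_{\R^d}|K_\lambda^{1/2}(x,y)|^2\,\dd x = K_\lambda(y,y)$, rather than by invoking continuity of $h$, since $\ell$ is only assumed bounded.
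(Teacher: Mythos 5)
Your proof is correct and follows the same skeleton as the paper's: condition on $\Phi_\lambda$ to reduce the Laplace transform to the generating functional $\Exp\bigl[\prod_{n}\Lpl_P\bigl(\xi_{\bm{s},\bm{z}}(X_n)\bigr)\bigr]$, identify this with the Fredholm determinant $\Det(\mathcal{I}-\K_{\lambda,\Lpl\circ\xi})$, and then expand the determinant as $\exp\bigl(-\sum_{n\ge1}\Tr\bigl({\K_{\lambda,\Lpl\circ\xi}}^n\bigr)/n\bigr)$, which requires trace-class and operator norm strictly below one. Where you genuinely differ is in the middle step and how its hypotheses are checked. The paper invokes a specific proposition (Shirai--Takahashi \cite{ShirTaka03}, as extended by \cite{LiBaccDhilAndr15}) whose hypotheses (a)--(c) are tailored to $f=-\ln\Lpl_P(\xi_{\bm{s},\bm{z}})$ without compact support, and verifies them via Jensen's inequality and $1-\Lpl_P(u)\le p\,u$; you instead invoke a version of the generating-functional theorem whose only hypotheses are $0\le h\le1$ and trace-class of $\sqrt{h}\,\K_\lambda\sqrt{h}$, and verify trace-class through the factorization $\K_{\lambda,\Lpl\circ\xi}=C^*C$ with $C=\K_\lambda^{1/2}M$ Hilbert--Schmidt. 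That statement is true, but it is not literally what the references you cite provide: \cite{HougKrisPereVira09} and \cite{Sosh00} give the determinant formula for compactly supported test functions, and the passage to merely integrable $h$ (note $\xi_{\bm{s},\bm{z}}$ has no compact support here) is precisely the content of the extension in \cite{LiBaccDhilAndr15} that the paper quotes as its Proposition~4; strictly speaking you should either cite that extension or supply the short approximation argument ($h_r=h\,\ind{B_r}$, trace-norm convergence of $C_r^*C_r\to C^*C$ plus monotone convergence of the products). Two points where your write-up is actually sharper than the paper's: you derive $\Det(\mathcal{I}-A)=\exp\bigl(-\sum_{n}\Tr(A^n)/n\bigr)$ with the explicit bound $\sum_{n}|\Tr(A^n)|/n\le\|A\|_1\sum_{n}\|A\|_{\op}^{n-1}$ rather than citing Reed--Simon, and your norm bound $\sup_x h(x)\le 1-\Lpl_P\bigl(b_\ell\sum_j s_j\bigr)<1$, which uses boundedness of $\xi_{\bm{s},\bm{z}}$ and monotonicity of $\Lpl_P$, is more complete than the paper's one-line justification (pointwise strict positivity of $\Lpl_P(\xi_{\bm{s},\bm{z}}(x))$ alone does not yield a strict operator-norm inequality; the uniform bound you provide is what is really needed).
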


To prove the lemma, we use the following result in the literature.

\begin{proposition}[{Cf.~\cite[Theorem~1.2]{ShirTaka03} and
  \cite[Lemma~2 and
    Corollary~1]{LiBaccDhilAndr15}}]\label{prp:det-pgfl}
Let $\Phi=\sum_{n=1}^\infty\delta_{X_n}$ denote a determinantal point
process on $\R^d$, where the kernel~$K$ with respect to the Lebesgue
measure ensures the existence of $\Phi$.
Then, for any measurable function~$v$:~$\R^d\to[0,1]$ such that $f(x)
:= -\ln v(x)$ satisfies (a)~$\lim_{\|x\|\to\infty}f(x)=0$,
(b)~$\lim_{r\to\infty}\int_{\|x\|>r}K(x,x)\,f(x)\,\dd x = 0$, and
(c)~$\int_{\R^d}K(x,x)\bigl[1-\exp\bigl(-f(x)\bigr)\bigr]\,\dd x <
\infty$, the probability generating functional of $\Phi$ is given by
\[
  \Exp\biggl[\prod_{n=1}^\infty v(X_n)\biggr]
  = \Det(\mathcal{I}-\K_v),
\]
where $\Det$ stands for the Fredholm determinant, $\mathcal{I}$
denotes the identity operator and $\K_v$ is the integral operator
given by the kernel~$K_v(x,y) = \sqrt{1-v(x)}\,K(x,y)\sqrt{1-v(y)}$,
$x,y\in\R^d$.
\end{proposition}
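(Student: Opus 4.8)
The plan is to peel off the amplitudes by conditioning on $\Phi_\lambda$, recognize the resulting expression as the probability generating functional (PGFL) of $\Phi_\lambda$, invoke Proposition~\ref{prp:det-pgfl} to rewrite it as a Fredholm determinant, and then expand that determinant into the exponential of a trace series. First I would write $\sum_{j=1}^m s_j\,I_\lambda(z_j) = \sum_{n=1}^\infty P_n\,\xi_{\bm{s},\bm{z}}(X_n)$ with $\xi_{\bm{s},\bm{z}}(x) = \sum_{j=1}^m s_j\,\ell(z_j-x)$, so that the exponential factorizes over the points. Since the $P_n$ are i.i.d.\ and independent of $\Phi_\lambda$, taking the conditional expectation over the amplitudes first gives
\begin{equation*}
  \Lpl_{I_\lambda}(\bm{s},\bm{z})
  = \Exp\biggl[\prod_{n=1}^\infty \Lpl_P\bigl(\xi_{\bm{s},\bm{z}}(X_n)\bigr)\biggr],
\end{equation*}
which is exactly the PGFL of $\Phi_\lambda$ evaluated at $v(x) = \Lpl_P(\xi_{\bm{s},\bm{z}}(x))$. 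Because $s_j\ge0$ and $\ell\ge0$, we have $\xi_{\bm{s},\bm{z}}\ge0$ and hence $v(x)\in[0,1]$, so $v$ is an admissible test function for Proposition~\ref{prp:det-pgfl}.

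Next I would verify the regularity hypotheses (a)--(c) of Proposition~\ref{prp:det-pgfl} for $f = -\ln v$, using $K_\lambda(x,x)=\lambda$. The single workhorse estimate is $0 \le 1-\Lpl_P(u) = \Exp[1-e^{-uP_1}] \le p\,u$, which yields $1-v(x) \le p\,\xi_{\bm{s},\bm{z}}(x)$ and therefore $\int_{\R^d}(1-v(x))\,\dd x \le p\sum_{j=1}^m s_j\,c_\ell<\infty$, securing~(c). Since $\ell\le b_\ell$, the function $\xi_{\bm{s},\bm{z}}$ is bounded, so $v$ is bounded away from $0$; on the resulting range $-\ln(1-y)\le C\,y$ for a finite $C$, giving $f\le C(1-v)$ and hence the integrability of $f$ needed for the tail condition~(b), while, granting that $\ell$ vanishes at infinity, condition~(a) follows. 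Proposition~\ref{prp:det-pgfl} then identifies $\Lpl_{I_\lambda}(\bm{s},\bm{z})$ with $\Det(\mathcal{I}-\K_{\lambda,\Lpl\circ\xi})$, whose kernel is exactly~\eqref{eq:K_(lambda,L)}.

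Finally I would pass from the Fredholm determinant to the stated exponential through the standard identity $\Det(\mathcal{I}-\K) = \exp\bigl(-\sum_{n=1}^\infty \tfrac1n\Tr(\K^n)\bigr)$, valid for a trace-class operator $\K$ whose spectral radius is below $1$. Writing $\K_{\lambda,\Lpl\circ\xi} = M\,\K_\lambda\,M$ with $M$ the operator of multiplication by $\sqrt{1-v}$, positivity of $\K_\lambda$ (its spectrum lies in $[0,1]$ by assumption~(iii)) passes to $\K_{\lambda,\Lpl\circ\xi}$, and the finite diagonal integral $\Tr(\K_{\lambda,\Lpl\circ\xi}) = \lambda\int_{\R^d}(1-v(x))\,\dd x<\infty$ makes it trace-class. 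Moreover $\sup_x(1-v(x))<1$ forces $\|\K_{\lambda,\Lpl\circ\xi}\|_{\op} \le \sup_x(1-v(x))\,\|\K_\lambda\|_{\op}<1$, so the logarithm series converges and the trace expansion is legitimate, completing the identification with~\eqref{eq:det-fdl}.

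I expect the main obstacle to lie in this middle stretch of analytic bookkeeping: checking (a)--(c) cleanly and, above all, establishing that $\K_{\lambda,\Lpl\circ\xi}$ is both trace-class and of operator norm strictly less than $1$, since these are exactly what make the Fredholm determinant and its trace-series expansion simultaneously well-defined. All of it hinges on the elementary bound $1-\Lpl_P(u)\le p\,u$ together with the boundedness and integrability of $\ell$; by comparison, the reduction to the PGFL and the determinant-to-trace expansion are routine.
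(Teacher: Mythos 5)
Your proposal does not prove the statement at hand. The statement is Proposition~\ref{prp:det-pgfl} itself, yet your argument invokes Proposition~\ref{prp:det-pgfl} as a black box in its middle step, so as a proof of that proposition it is circular. What you have actually written is a proof of the downstream Lemma~\ref{lem:det-fdl}: the reduction of $\Lpl_{I_\lambda}$ to a probability generating functional by conditioning on the amplitudes, the verification of hypotheses (a)--(c) for the particular function $v=\Lpl_P\circ\xi_{\bm{s},\bm{z}}$ via the bound $1-\Lpl_P(u)\le p\,u$, and the determinant-to-trace-series expansion justified by trace-class plus operator norm strictly below one. That material closely matches the paper's proof of Lemma~\ref{lem:det-fdl} (including the same implicit reliance on $\ell(x)\to0$ as $\|x\|\to\infty$ for condition~(a), which boundedness and integrability of $\ell$ alone do not supply), but none of it addresses the proposition, whose content is precisely the identity $\Exp\bigl[\prod_{n=1}^\infty v(X_n)\bigr]=\Det(\mathcal{I}-\K_v)$ for a general admissible $v$.

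For the record, the paper does not prove Proposition~\ref{prp:det-pgfl} either; it quotes it, attributing the case where $f=-\ln v$ has compact support to \cite{ShirTaka03} and the extension under (a)--(c) to \cite{LiBaccDhilAndr15}. A genuine proof would have to (i)~establish the identity when $1-v$ is compactly supported, e.g.\ by expanding the generating functional in terms of the correlation functions $\rho_n=\det\bigl(K(x_i,x_j)\bigr)$ and matching it with the Fredholm series of $\Det(\mathcal{I}-\K_v)$, and (ii)~remove the compactness by truncation: set $v_r(x)=v(x)$ for $\|x\|\le r$ and $v_r(x)=1$ otherwise, show $\Exp\bigl[\prod_n v_r(X_n)\bigr]\to\Exp\bigl[\prod_n v(X_n)\bigr]$ using (a) and (b) (Campbell's formula gives $\Exp\bigl[\sum_{\|X_n\|>r}f(X_n)\bigr]=\int_{\|x\|>r}K(x,x)\,f(x)\,\dd x\to0$, so far-away points contribute negligibly to the infinite product), and show $\Det(\mathcal{I}-\K_{v_r})\to\Det(\mathcal{I}-\K_v)$ using (c), which makes $\K_v$ trace class via $\Tr(\K_v)=\int_{\R^d}K(x,x)\bigl[1-v(x)\bigr]\,\dd x<\infty$, so that $\K_{v_r}\to\K_v$ in trace norm and the trace-norm continuity of the Fredholm determinant applies. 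In other words, conditions (a)--(c) are not hypotheses to be verified for one concrete $v$ --- that is Lemma~\ref{lem:det-fdl}'s job, which you did --- but the structural assumptions under which this limiting argument goes through, and that argument is entirely absent from your proposal.
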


The result in Proposition~\ref{prp:det-pgfl} is first presented in
\cite{ShirTaka03} in the form of Laplace functional for
function~$f(x)=-\ln v(x)$ such that $f$ has a compact support.
It is then generalized in \cite{LiBaccDhilAndr15} to $f$ satisfying
the conditions~(a)--(c) in the proposition when $d=2$, whereas this
generalization is also available for $\R^d$, $d=2,3,\ldots$.

\begin{proof}[Proof of Lemma~\ref{lem:det-fdl}]
Similar to obtaining the first equality in \eqref{eq:Poi-fv-pr3}, we
have
\begin{equation}\label{eq:det-fdl-pr1}
  \Lpl_{I_\lambda}(\bm{s},\bm{z})
  = \Exp\biggl[
      \prod_{n=1}^\infty\Lpl_P\bigl(\xi_{\bm{s},\bm{z}}(X_n)\bigr)
    \biggr].
\end{equation}
To apply Proposition~\ref{prp:det-pgfl}, we have to confirm that
$f(x)=-\ln\Lpl_P\bigl(\xi_{\bm{s},\bm{z}}(x)\bigr)$ satisfies
conditions~(a)--(c) in it.
For (a), recall that $\xi_{\bm{s},\bm{z}}(x) = \sum_{j=1}^m
s_j\,\ell(z_j-x)$ and $\Lpl_P\bigl(\xi_{\bm{s},\bm{z}}(x)\bigr) =
\Exp[e^{-\xi_{\bm{s},\bm{z}}(x)\,P_1}]$.
Since $e^{-\xi_{\bm{s},\bm{z}}(x)\,P_1}\in(0,1]$ and
$e^{-\xi_{\bm{s},\bm{z}}(x)\,P_1}\to1$ as $\|x\|\to\infty$, the dominated
convergence theorem leads to $-\ln\Lpl_P\bigl(\xi_{\bm{s},\bm{z}}(x)\bigr)\to0$
as $\|x\|\to\infty$.
Next, we confirm (b).
Since $K_\lambda(x,x)=\lambda$, it suffices to show that
$\int_{\R^d}\bigl[-\ln\Lpl_P\bigl(\xi_{\bm{s},\bm{z}}(x)\bigr)\bigr]\,\dd x <
\infty$, which follows from the integrability of $\xi_{\bm{s},\bm{z}}$ because
$-\ln\Lpl_P\bigl(\xi_{\bm{s},\bm{z}}(x)\bigr) \le p\,\xi_{\bm{s},\bm{z}}(x)$ by Jensen's
inequality.
The condition~(c) is confirmed by showing $\int_{\R^d} \bigl[1 -
  \Lpl_P\bigl(\xi_{\bm{s},\bm{z}}(x)\bigr)\bigr]\,\dd x < \infty$.
Integration by parts yields
\begin{align*}
  1 - \Lpl_P\bigl(\xi_{\bm{s},\bm{z}}(x)\bigr)
  &= \int_0^\infty [1 - e^{-\xi_{\bm{s},\bm{z}}(x)\,t}]\,\dd F_P(t)
  \\
  &= \xi_{\bm{s},\bm{z}}(x)
     \int_0^\infty e^{-\xi_{\bm{s},\bm{z}}(x)\,t}\,\Bar{F_P}(t)\,\dd t
 \le p\,\xi_{\bm{s},\bm{z}}(x),
\end{align*}
which is integrable.
Therefore, we can apply Proposition~\ref{prp:det-pgfl} to
\eqref{eq:det-fdl-pr1} and obtain
\[
  \Lpl_{I_\lambda}(\bm{s},\bm{z})
  = \Det(\mathcal{I}-\K_{\lambda,\Lpl\circ\xi}).
\]
By the condition~(c) above, the operator $\K_{\lambda,\Lpl\circ\xi}$
given by \eqref{eq:K_(lambda,L)} is of trace-class.
Moreover, its operator norm satisfies
$\|\K_{\lambda,\Lpl\circ\xi}\|_{\op} < \|\K_\lambda\|_{\op} \le 1$ since
$\Lpl_P\bigl(\xi_{\bm{s},\bm{z}}(x)\bigr)$ is strictly positive.
Hence, the Fredholm determinant
$\mathsf{Det}(\mathcal{I}-\mathcal{K}_{\lambda,\Lpl\circ\xi})$ has the
exponential expression~\eqref{eq:det-fdl} (cf.~\cite[p.~331,
  Lemma~6]{ReedSimoIV78}).
\end{proof}

\subsection{Case of finite second moment of amplitudes}

Here is the extension of Proposition~\ref{prp:Poi-fv} to the case of a
determinantal point process, which we prove by applying a similar
discussion to that in~\cite{BretClarGoba19}.

\begin{theorem}\label{thm:det-fv}
Let $\Phi_\lambda$ be the determinantal point process with
intensity~$\lambda$ described above.
Suppose that $\Exp[{P_1}^2]<\infty$ and $g(\lambda) = \lambda^{1/2}$
in \eqref{eq:TildeI_lambda}.
In addition, we assume that
\begin{equation}\label{eq:AssumptionA}
  \int_{\R^d}|K_\lambda(0,x)|^2\,\dd x = o(\lambda)
  \quad\text{as $\lambda\to\infty$.}
\end{equation}
Then, as $\lambda\to\infty$, $\{\Tilde{I_\lambda}(z)\}_{z\in\R^d}$
converges in finite dimensional distributions to the same Gaussian
random field as in Proposition~\ref{prp:Poi-fv}.
\end{theorem}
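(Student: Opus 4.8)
The plan is to start from the exponential expression in Lemma~\ref{lem:det-fdl}, applied with $\bm{s}$ replaced by $\bm{s}/g(\lambda)$ so that the relevant kernel carries $\xi_{\bm{s},\bm{z}}/g(\lambda)$ inside $\Lpl_P$. Writing $v_\lambda(x) = 1 - \Lpl_P\bigl(\xi_{\bm{s},\bm{z}}(x)/g(\lambda)\bigr)$ and letting $\K_\lambda^{(v)}$ be the positive self-adjoint trace-class operator of \eqref{eq:K_(lambda,L)} with $\xi_{\bm{s},\bm{z}}$ replaced by $\xi_{\bm{s},\bm{z}}/g(\lambda)$, the finite dimensional Laplace transform of $\Tilde{I_\lambda}$, after attaching the centering term computed by Campbell's formula as in \eqref{eq:Poi-fv-pr2}, reads
\[
  \Lpl_{\Tilde{I_\lambda}}(\bm{s},\bm{z})
  = \exp\biggl(
      - \sum_{n=1}^\infty\frac{1}{n}\,\Tr\bigl((\K_\lambda^{(v)})^n\bigr)
      + \frac{\lambda\,p}{g(\lambda)}
        \int_{\R^d}\xi_{\bm{s},\bm{z}}(x)\,\dd x
    \biggr).
\]

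First I would isolate the $n=1$ term. Since $K_\lambda(x,x)=\lambda$, we have $\Tr(\K_\lambda^{(v)}) = \lambda\int_{\R^d} v_\lambda(x)\,\dd x$, and because $\int_0^\infty\psi\bigl(\xi_{\bm{s},\bm{z}}(x)\,t/g(\lambda)\bigr)\,\dd F_P(t) = -v_\lambda(x) + p\,\xi_{\bm{s},\bm{z}}(x)/g(\lambda)$, the combination of this term with the centering contribution reproduces \emph{exactly} the Poisson exponent $\lambda\int_{\R^d}\int_0^\infty\psi\bigl(\xi_{\bm{s},\bm{z}}(x)\,t/g(\lambda)\bigr)\,\dd F_P(t)\,\dd x$ of \eqref{eq:Poi-fv-pr4}. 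Hence, by the identical dominated-convergence argument from the proof of Proposition~\ref{prp:Poi-fv} with $g(\lambda)=\lambda^{1/2}$, this part converges to the Gaussian exponent $\tfrac{1}{2}\Exp[{P_1}^2]\int_{\R^d}[\xi_{\bm{s},\bm{z}}(x)]^2\,\dd x$ of \eqref{eq:Poi-fv-pr5}.

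The main work, and the point where \eqref{eq:AssumptionA} enters, is to show that $\sum_{n\ge2}\tfrac{1}{n}\Tr\bigl((\K_\lambda^{(v)})^n\bigr)\to0$. I would use the bound $v_\lambda(x)\le p\,\xi_{\bm{s},\bm{z}}(x)/g(\lambda)$ (from $1-e^{-u}\le u$) together with the Hermitian identity $K_\lambda(y,x)=K_\lambda(x,y)^*$ to write the $n=2$ term as $\tfrac{1}{2}\iint v_\lambda(x)\,v_\lambda(y)\,|K_\lambda(x,y)|^2\,\dd x\,\dd y$; bounding one factor $\xi_{\bm{s},\bm{z}}$ by its supremum (finite, $\le(\sum_j s_j)\,b_\ell$) and invoking the motion-invariance $\int_{\R^d}|K_\lambda(x,y)|^2\,\dd y = \int_{\R^d}|K_\lambda(0,w)|^2\,\dd w$, this is at most a constant times $\lambda^{-1}\int_{\R^d}|K_\lambda(0,w)|^2\,\dd w$, which is $o(1)$ precisely by \eqref{eq:AssumptionA}. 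For $n\ge3$ I would exploit the positivity and self-adjointness of $\K_\lambda^{(v)}$ to get $\Tr\bigl((\K_\lambda^{(v)})^n\bigr)\le\|\K_\lambda^{(v)}\|_{\op}^{\,n-2}\,\Tr\bigl((\K_\lambda^{(v)})^2\bigr)$, where $\|\K_\lambda^{(v)}\|_{\op}\le\|v_\lambda\|_\infty\le p\,\|\xi_{\bm{s},\bm{z}}\|_\infty/\lambda^{1/2}\to0$; summing the resulting geometric series bounds the whole tail by $\Tr\bigl((\K_\lambda^{(v)})^2\bigr)/\bigl(1-\|\K_\lambda^{(v)}\|_{\op}\bigr)\to0$.

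Combining the $n=1$/centering limit with the vanishing of the tail yields $\lim_{\lambda\to\infty}\Lpl_{\Tilde{I_\lambda}}(\bm{s},\bm{z}) = \exp\bigl(\tfrac{1}{2}\Exp[{P_1}^2]\,\bm{s}^\top\bm{L}(\bm{z})\,\bm{s}\bigr)$, identical to \eqref{eq:Poi-fv-pr5}, so the finite dimensional distributions converge to the same Gaussian field as in Proposition~\ref{prp:Poi-fv}. The principal obstacle is the tail control: the delicate observation is that \eqref{eq:AssumptionA} is exactly the hypothesis that kills the $n=2$ (Hilbert--Schmidt) term under the $\lambda^{1/2}$ scaling, after which the decay of the operator norm renders all higher-order terms a negligible geometric remainder, so that only the $n=1$ term survives and recovers the Poisson Gaussian limit.
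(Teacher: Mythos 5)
Your proof is correct and takes essentially the same route as the paper: both start from Lemma~\ref{lem:det-fdl}, observe that the $n=1$ trace term combined with the centering reproduces the Poisson exponent \eqref{eq:Poi-fv-pr4} (hence the Gaussian limit of Proposition~\ref{prp:Poi-fv}), and then show that $\sum_{n\ge2}\frac{1}{n}\,\Tr\bigl({\K_{\lambda,\Lpl\circ(\xi/g)}}^n\bigr)\to0$ via the key estimate $\Tr\bigl(|\K_{\lambda,\Lpl\circ(\xi/g)}|^2\bigr)=O\bigl(\lambda^{-1}\int_{\R^d}|K_\lambda(0,y)|^2\,\dd y\bigr)$, which is $o(1)$ exactly by \eqref{eq:AssumptionA}. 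The only minor difference is in absorbing the terms $n\ge3$: the paper iterates $\Tr(|A|^n)\le\bigl(\Tr(|A|^2)\bigr)^{n/2}$ and sums to $-\ln\bigl(1-\bigl(\Tr(|A|^2)\bigr)^{1/2}\bigr)$, whereas you exploit positivity and the operator-norm bound $\|\K_{\lambda,\Lpl\circ(\xi/g)}\|_{\op}\le\|v_\lambda\|_\infty\le p\,\|\xi_{\bm{s},\bm{z}}\|_\infty/\lambda^{1/2}$ to sum a geometric series --- both are equally valid.
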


\begin{remark}\label{rem:MiyoShir17}
In general, it holds that $\int_{\R^d}|K_\lambda(0,x)|^2\,\dd x \le
K_\lambda(0,0) = \lambda$ (see \cite[Lemma~3.3]{MiyoShir17}).
Since $\rho_2(0,x) = \lambda^2 - |K_\lambda(0,x)|^2$,
condition~\eqref{eq:AssumptionA} above requires that the negative
correlation in $\Phi_\lambda$ is weakening as $\lambda\to\infty$.
\end{remark}

\begin{proof}
Similar to obtaining \eqref{eq:Poi-fv-pr1}, \eqref{eq:Poi-fv-pr2} and
\eqref{eq:Poi-fv-pr3}, we have
\begin{equation}\label{eq:det-fv-pr1}
  \Lpl_{\Tilde{I_\lambda}}(\bm{s}, \bm{z})
  = \Exp\biggl[
      \prod_{n=1}^\infty
        \Lpl_P\Bigl(\frac{\xi_{\bm{s},\bm{z}}(X_n)}{g(\lambda)}\Bigr)
    \biggr]
    \,
    \exp\biggl(
      \frac{\lambda\,p}{g(\lambda)}
      \int_{\R^d} \xi_{\bm{s},\bm{z}}(x)\,\dd x
    \biggr),
\end{equation}
where we recall that $\xi_{\bm{s},\bm{z}}(x) = \sum_{j=1}^m
s_j\,\ell(z_j-x)$.
By Lemma~\ref{lem:det-fdl}, the expectation on the right-hand side
above is equal to
\begin{equation}\label{eq:det-fv-pr2}
  \Exp\biggl[
    \prod_{n=1}^\infty
      \Lpl_P\biggl(
        \frac{\xi_{\bm{s}, \bm{z}}(X_n)}{g(\lambda)}
      \biggr)
  \biggr]
  = \exp\biggl(
     - \sum_{n=1}^\infty
         \frac{1}{n}\,
         \Tr\bigl({\K_{\lambda,\Lpl\circ(\xi/g)}}^n\bigr)
   \biggr),
\end{equation}
where $\K_{\lambda,\Lpl\circ(\xi/g)}$ denotes the integral operator
given by the kernel;
\[
  K_{\lambda,\Lpl\circ(\xi/g)}(x,y)
  = \sqrt{1-\Lpl_P\Bigl(\frac{\xi_{\bm{s},\bm{z}}(x)}{g(\lambda)}\Bigr)}\,
    K_\lambda(x,y)\,
    \sqrt{1-\Lpl_P\Bigl(\frac{\xi_{\bm{s},\bm{z}}(y)}{g(\lambda)}\Bigr)}.
\]
Note that the term of $n=1$ inside the exponential
in~\eqref{eq:det-fv-pr2} is equal to
\[
 \Tr\bigl(\K_{\lambda,\Lpl\circ(\xi/g)}\bigr)
 = \lambda
   \int_{\R^d}
     \Bigl[
       1 - \Lpl_P\Bigl(\frac{\xi_{\bm{s},\bm{z}}(x)}{g(\lambda)}\Bigr)
     \Bigr]\,
   \dd x,
\]
which is identical to \eqref{eq:Poi-fv-pr3} in the case of a Poisson
point process.
Therefore, the proof is completed if we can show that
\begin{equation}\label{eq:det-fv-pr3}
  \sum_{n=2}^\infty
    \frac{1}{n}\,
    \Tr\bigl({\K_{\lambda,\Lpl\circ(\xi/g)}}^n\bigr)
  \to 0
  \quad\text{as $\lambda\to\infty$.}
\end{equation}
Note that it holds that $\Tr(|A|^n) \le
\Tr(|A|^2)^{1/2}\,\Tr(|A|^{n-1})$ for a trace-class operator~$A$ since
$\Tr(|AB|) \le \|A\|_{\op}\,\Tr(|B|)$ for a bounded operator~$A$ and a
trace-class operator~$B$ (cf.~\cite[p.~218, Problem~28]{ReedSimoI80})
and that $\|A\|_{\op} \le \Tr(|A|^2)^{1/2}$ for a Hilbert-Schmidt
operator~$A$ (cf.~\cite[p.~210, Theorem~VI.22 (d) or p.~218,
  Problem~25]{ReedSimoI80}).
Applying this to the left-hand side of \eqref{eq:det-fv-pr3}
inductively, we have
\begin{align}\label{eq:det-fv-pr4}
  \biggl|
    \sum_{n=2}^\infty
      \frac{1}{n}\,
      \Tr\bigl({\K_{\lambda,\Lpl\circ(\xi/g)}}^n\bigr)
  \biggr|
  &\le \sum_{n=2}^\infty
         \frac{1}{n}\,
         \Tr\bigl(|\K_{\lambda,\Lpl\circ(\xi/g)}|^n\bigr)
  \nonumber\\
  &\le \sum_{n=1}^\infty
         \frac{1}{n}\,
         \bigl(
           \Tr\bigl(|\K_{\lambda,\Lpl\circ(\xi/g)}|^2\bigr)
         \bigr)^{n/2}
  \nonumber\\
  &= - \ln\Bigl(
         1 - \bigl(
               \Tr\bigl(|\K_{\lambda,\Lpl\circ(\xi/g)}|^2\bigr)
             \bigr)^{1/2}
       \Bigr),
\end{align}
where the last equality holds when
$\bigl(\Tr\bigl(|\K_{\lambda,\Lpl\circ(\xi/g)}|^2\bigr)\bigr)^{1/2}<1$,
which is ensured for sufficiently large~$\lambda$ as shown below.
Since $1-\Lpl_P(s) \le s p$ and $|K_\lambda(x,y)|^2 =
|K_\lambda(0,y-x)|^2$,
\begin{align}\label{eq:det-fv-pr5}
  \Tr\bigl(|\K_{\lambda,\Lpl\circ(\xi/g)}|^2\bigr)
  &= \int_{\R^d}\!\int_{\R^d}
       \Bigl(1-\Lpl_P\Bigl(\frac{\xi_{\bm{s},\bm{z}}(x)}{g(\lambda)}\Bigr)\Bigr)\,
       |K_\lambda(x,y)|^2\,
       \Bigl(1-\Lpl_P\Bigl(\frac{\xi_{\bm{s},\bm{z}}(y)}{g(\lambda)}\Bigr)\Bigr)\,
     \dd x\,\dd y
  \nonumber\\
  &\le \Bigl(\frac{p}{g(\lambda)}\Bigr)^2
       \int_{\R^d}\!\int_{\R^d}
         \xi_{\bm{s},\bm{z}}(x)|K_\lambda(x,y)|^2\,\xi_{\bm{s},\bm{z}}(y)\,
       \dd x\,\dd y
  \nonumber\\
  &\le b_\ell\,c_\ell\,
       \biggl(\sum_{j=1}^m s_j\biggr)^2\,
       \Bigl(\frac{p}{g(\lambda)}\Bigr)^2
       \int_{\R^d} |K_\lambda(0,y)|^2\,\dd y,
\end{align}
where $b_\ell$ and $c_\ell$ are the same as in \eqref{eq:bound}, and
we use $\xi_{\bm{s},\bm{z}}(y) = \sum_{j=1}^m s_j\,\ell(z_j-y) \le
b_\ell\sum_{j=1}^m s_j$ and $\int_{\R^d} \xi_{\bm{s},\bm{z}}(x)\,\dd x
= c_\ell\sum_{j=1}^m s_j$ for fixed $\bm{s}\in[0,\infty)^m$ and
$\bm{z}\in\R^{d\times m}$.
Hence, when $g(\lambda) = \lambda^{1/2}$, \eqref{eq:det-fv-pr5} and
therefore \eqref{eq:det-fv-pr4} go to $0$ as $\lambda\to\infty$ under
assumption~\eqref{eq:AssumptionA}, which implies \eqref{eq:det-fv-pr3}.
\end{proof}

\subsection{Case of regularly varying amplitude distribution}

Here is our final result in this work, which is the extension of
Theorem~\ref{thm:Poi-infv} to the case of a determinantal point
process.

\begin{theorem}\label{thm:det-infv}
Let $\Phi_\lambda$ be the determinantal point process with
intensity~$\lambda$ described in the beginning of this section.
Suppose that $\Bar{F_P}$ is regularly varying with index~$-\alpha$ for
$\alpha\in(1,2)$ and let $g$ in \eqref{eq:TildeI_lambda} be an
asymptotic inverse of $1/\Bar{F_P}$.
Then, as $\lambda\to\infty$, $\{\Tilde{I_\lambda}(z)\}_{z\in\R^d}$
converges in finite dimensional distributions to the same
$\alpha$-stable random field as in Theorem~\ref{thm:Poi-infv}.
\end{theorem}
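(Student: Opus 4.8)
The plan is to follow the proof of Theorem~\ref{thm:det-fv} step for step, substituting the regularly varying analysis of Theorem~\ref{thm:Poi-infv} for the finite-variance one. First I would form the centralized and scaled finite dimensional Laplace transform $\Lpl_{\Tilde{I_\lambda}}(\bm{s},\bm{z})$ exactly as in \eqref{eq:det-fv-pr1}, and apply Lemma~\ref{lem:det-fdl} to rewrite the expectation as $\exp\bigl(-\sum_{n=1}^\infty n^{-1}\,\Tr({\K_{\lambda,\Lpl\circ(\xi/g)}}^n)\bigr)$ as in \eqref{eq:det-fv-pr2}. The $n=1$ term equals $\lambda\int_{\R^d}\bigl[1-\Lpl_P(\xi_{\bm{s},\bm{z}}(x)/g(\lambda))\bigr]\,\dd x$, which coincides with the Poisson expression \eqref{eq:Poi-fv-pr3}; combining it with the centralization term $\lambda p\,g(\lambda)^{-1}\int_{\R^d}\xi_{\bm{s},\bm{z}}(x)\,\dd x$ reproduces precisely the integral $\lambda\int_{\R^d}\!\int_0^\infty\psi(\xi_{\bm{s},\bm{z}}(x)\,t/g(\lambda))\,\dd F_P(t)\,\dd x$ of \eqref{eq:Poi-fv-pr4}. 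By the computation already carried out in Theorem~\ref{thm:Poi-infv}, this converges to $\frac{\Gamma(2-\alpha)}{\alpha-1}\int_{\R^d}[\xi_{\bm{s},\bm{z}}(x)]^\alpha\,\dd x$, i.e.\ to the exponent of the target $\alpha$-stable Laplace transform~\eqref{eq:Poi-infv}; the dominated-convergence justification there uses only the single-integral form and so transfers verbatim.

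It then remains to show that the tail $\sum_{n=2}^\infty n^{-1}\,\Tr({\K_{\lambda,\Lpl\circ(\xi/g)}}^n)$ vanishes as $\lambda\to\infty$. For this I would invoke the identical chain of trace inequalities as in \eqref{eq:det-fv-pr4}, which bounds the tail by $-\ln(1-(\Tr(|\K_{\lambda,\Lpl\circ(\xi/g)}|^2))^{1/2})$ once the Hilbert--Schmidt norm is below one, thereby reducing everything to $\Tr(|\K_{\lambda,\Lpl\circ(\xi/g)}|^2)\to0$. Since $\alpha>1$ the mean $p$ is finite, so the bound $1-\Lpl_P(s)\le s\,p$ still holds and the estimate~\eqref{eq:det-fv-pr5} carries over unchanged, giving $\Tr(|\K_{\lambda,\Lpl\circ(\xi/g)}|^2)\le b_\ell\,c_\ell\,(\sum_{j=1}^m s_j)^2\,(p/g(\lambda))^2\int_{\R^d}|K_\lambda(0,y)|^2\,\dd y$.

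The one genuinely new point, and the place where the argument diverges from Theorem~\ref{thm:det-fv}, is that assumption~\eqref{eq:AssumptionA} is \emph{not} imposed here, so I cannot use it to control $\int_{\R^d}|K_\lambda(0,y)|^2\,\dd y$. Instead I would use the unconditional inequality $\int_{\R^d}|K_\lambda(0,y)|^2\,\dd y\le K_\lambda(0,0)=\lambda$ from Remark~\ref{rem:MiyoShir17}, reducing the bound to a constant multiple of $\lambda/g(\lambda)^2$. Because $g$ is regularly varying with index $1/\alpha$, the function $g(\lambda)^2$ is regularly varying with index $2/\alpha>1$ (as $\alpha<2$), whence $\lambda/g(\lambda)^2\to0$. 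Thus $\Tr(|\K_{\lambda,\Lpl\circ(\xi/g)}|^2)\to0$, the tail vanishes, and the limiting Laplace transform is $\Lpl_S(\bm{s},\bm{z})$ of~\eqref{eq:Poi-infv}, which yields convergence in finite dimensional distributions to the same $\alpha$-stable field as in Theorem~\ref{thm:Poi-infv}.

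I do not expect a serious obstacle: the heavy lifting was done in Theorems~\ref{thm:Poi-infv} and~\ref{thm:det-fv}, and the only insight required is that the super-linear growth of $g(\lambda)^2$ for $\alpha<2$ absorbs even the crude bound $\int|K_\lambda(0,\cdot)|^2\le\lambda$ --- so, in contrast to the finite-variance case, no decorrelation hypothesis on the kernel is needed. The mild care points are confirming that the one-sided Laplace-transform convergence on $[0,\infty)^m$ delivers convergence in finite dimensional distributions (as in the Poisson theorems, the limiting stable laws being totally skewed with finite Laplace transform on $[0,\infty)$), and that $\K_{\lambda,\Lpl\circ(\xi/g)}$ remains trace-class with operator norm below one for large $\lambda$, both of which are already guaranteed by Lemma~\ref{lem:det-fdl}.
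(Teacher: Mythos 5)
Your proposal is correct and follows essentially the same route as the paper: reduce to the Poisson computation via the $n=1$ trace term, kill the $n\ge2$ tail with the Hilbert--Schmidt bound \eqref{eq:det-fv-pr5}, and replace assumption~\eqref{eq:AssumptionA} by the unconditional bound $\int_{\R^d}|K_\lambda(0,y)|^2\,\dd y\le\lambda$ together with the fact that $g(\lambda)^2$ is regularly varying with index $2/\alpha>1$. This is precisely the paper's argument, which it states as "the only difference" from the proof of Theorem~\ref{thm:det-fv}.
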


Note that no additional assumption (like \eqref{eq:AssumptionA}) is
required in this case.

\begin{proof}
The proof is almost the same as that of Theorem~\ref{thm:det-fv}.
Only the difference is as follows.
Now, $g(\lambda)$ is regularly varying with index~$1/\alpha$ and can
be represented as $g(\lambda) = \lambda^{1/\alpha}\,L_0(\lambda)$ with
a slowly varying function $L_0$.
Therefore, in~\eqref{eq:det-fv-pr5}, since
$\int_{\R^d}|K_\lambda(0,y)|^2\,\dd y \le K_\lambda(0,0) = \lambda$ by
Remark~\ref{rem:MiyoShir17}, 
\[
  \frac{1}{g(\lambda)^2}
  \int_{\R^d} |K_\lambda(0,y)|^2\,\dd y
  \le \frac{\lambda^{1-2/\alpha}}{L_0(\lambda)^2},
\]
which goes to $0$ as $\lambda\to\infty$ since $\alpha\in(1,2)$.
\end{proof}

\section{Conclusion}\label{sec:conclusion}

In this work, we have considered a shot-noise field defined on a
stationary determinantal point process and have shown that its
centralized and suitably scaled version converges in finite
dimensional distributions to i)~a Gaussian random field when the
amplitudes have the finite second moment and ii)~an $\alpha$-stable
random field when the amplitudes follow a regularly varying
distribution with index~$-\alpha$ for $\alpha\in(1,2)$.
Some extensions can be considered as future work.
For example, as \cite{HeinSchm85} considers a shot-noise field defined
on a Brillinger mixing point process and shows the convergence in
distribution at one position, our result may be extended to the case
of a more general point process.
Furthermore, as \cite{HeinSchm85} and \cite{Inal12} use Berry-Esseen
bound to discuss the rate of convergence, the rates of the
convergences in Theorems~\ref{thm:det-fv} and~\ref{thm:det-infv} may
be interesting challenges.

\bibliographystyle{plain}
\small

\end{document}